\newcommand{\C}{{\mathbb C} }
\newcommand{\R}{{\mathbb R} }
\newcommand{\cO}{{\mathcal O} }
\newcommand{\cP}{{\mathcal P} }
\newcommand{\cT}{{\mathcal T} }
\newcommand{\cX}{{\mathcal X} }
\newcommand{\wh}{\widehat}
\newcommand{\pt}{\partial}
\def\ol#1{{\overline{#1}}}
\newtheorem{theorem} {Theorem} [section]
\newtheorem{definition}[theorem] {Definition}
\newtheorem{lemma}[theorem]  {Lemma}
\newtheorem{proposition}[theorem] {Proposition}
\newtheorem{corollary}[theorem] {Corollary}
\def\ke{K{\"a}h\-ler-Ein\-stein }
\def\ks{Ko\-dai\-ra-Spen\-cer }
\def\ka{K{\"a}h\-ler}
\def\wp{Weil-Pe\-ters\-son }
\def\fn{Fen\-chel-Nielsen }
\def\tei{Teich\-mül\-ler }
\def\ii{\sqrt{-1}}
\def\C{\mathbb{C}}
\def\cinf{C^\infty}
\def\tr{{\mathrm{\, tr\, }}}
\def\ab{{\alpha\ol\beta}}
\def\gba{{g^{\ol\beta\alpha}}}
\def\gab{{g_{\alpha\ol\beta}}}
\def\na{\nabla_}
\begin{document}

\title[Weil-Petersson Symplectic Form]{Geometric Approach to the Weil-Petersson Symplectic Form}
\author[R.\ Axelsson]{Reynir Axelsson}
\address{Háskóli Íslands, Hjar{\tiny \DH}arhaga 6, IS-107 Reykjavík, Ísland}
\email{reynir@raunvis.hi.is}
\author[G. Schumacher]{Georg Schumacher}
\address{Fachbereich Mathematik und Informatik,
Philipps-Universit\"at Marburg, Lahnberge, Hans-Meerwein-Strasse, D-35032
Marburg,Germany}
\email{schumac@mathematik.uni-marburg.de}
\date{}

\begin{abstract}
In a family of compact, canonically polarized, complex manifolds the first
variation of the lengths of closed geodesics is computed. As an application, we show the coincidence of the \fn and \wp symplectic forms on the \tei spaces of
compact Riemann surfaces in a purely geometric way. The method can also be
applied to situations like moduli spaces of weighted punctured Riemann surfaces, where the methods of Kleinian groups are not available.
\end{abstract}

\maketitle

\section{Introduction}\label{sec:intro}
In 1958 André Weil introduced the methods of deformation theory of compact complex manifolds to the study of \tei spaces \cite{weil}. In particular, he used the $L^2$-inner product, induced by a metric of constant negative curvature, to the harmonic representatives of \ks classes. He conjectured that the resulting Hermitian inner product on the \tei space would be of \ka\ type and of negative curvature. The resulting \ka\ form is since known as the {\it \wp form}.

Beginning in 1982, Wolpert analyzed the \fn structure of the \tei space
and showed that the standard symplectic form $\omega^{FN} =- \sum d\tau^i\wedge d\ell^i$, defined in terms of \fn coordinates, actually coincides with the \wp form $\omega^{WP}$. This was achieved in a sequence of papers \cite{wolpert:fn-annals,wolpert:sym-annals,wolpert:amj}, which also contain important arithmetic results. Most of the arguments use in an essential way the theory of Kleinian groups. For example, one key idea in Wolpert's approach is  to solve Beltrami equations on the upper half plane, to compute
derivatives of cross-ratios of the solutions, and then use Poincaré series, in
order to descend to the Riemann surfaces.

Our aim is to extend these results to cases where the original methods do not apply, i.e.\ to cases where the hyperbolic structure is not necessarily induced by the universal covering, such as weighted punctured Riemann surfaces. Our methods also provide a purely differential geometric approach to a key result in Wolpert's approach, namely the Weil-Petersson duality of the twist vector fields $\partial/\partial\tau^j$ and the differentials $d\ell^j$ of the corresponding length functions. We use path integrals along closed geodesics of harmonic Beltrami differentials and holomorphic quadratic differentials. More generally, the first variation of the length of a closed geodesic in a holomorphic family of \ke manifolds of negative curvature can be expressed as the integral of the harmonic \ks tensor along a geodesic; this is our Theorem~\ref{th:varlength}.

We consider the \tei space of (possibly weighted) punctured Riemann surfaces of genus $p$ with $n$ punctures. The existence of pants decompositions and \fn coordinates was shown by Zhang in \cite{zhang} under the natural assumption that the cone angles do not exceed the value of $\pi$, i.e.\ the weights are in the interval $[1/2,1]$ (c.f.\ Section~\ref{sec:cone}). Let $s^\kappa; \kappa=1, \ldots, N$ be local holomorphic coordinates on the \tei space near a fixed point $s_0$ corresponding to a Riemann surface $X$;  then the tangent vectors $\partial/\partial s^\kappa|_{s_0}$ correspond to harmonic Beltrami differentials $\mu_\kappa$ on $X$. An immediate consequence of Theorem~\ref{th:varlength} is the following result:

\begin{theorem}\label{th:main1} The complex derivative of a length
coordinate with respect to a holomorphic coordinate equals half the integral of the corresponding harmonic Beltrami differential along the closed geodesic:
\begin{equation}\label{eq:main1}
\left.\frac{\partial \ell^i(s)}{\partial s^\kappa}\right|_{s_0}=
\frac{1}{2}\int_{\gamma_i} \mu_\kappa.
\end{equation}
\end{theorem}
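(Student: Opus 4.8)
The plan is to deduce the formula directly from Theorem~\ref{th:varlength} by specializing the general variational formula for the length of a closed geodesic to the complex one-dimensional case and to a coordinate subfamily. First I would recall the deformation-theoretic input underlying the \tei space: the \ks map identifies the holomorphic tangent vector $\partial/\partial s^\kappa|_{s_0}$ with the harmonic Beltrami differential $\mu_\kappa$ on $X$. Restricting the family to the coordinate line through $s_0$ in the $s^\kappa$-direction produces a holomorphic one-parameter family of Riemann surfaces whose harmonic \ks tensor at $s_0$ is precisely $\mu_\kappa$, so that $\gamma_i$ deforms through the closed geodesics of the nearby hyperbolic metrics and $\ell^i(s)$ is the corresponding length function.

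Next I would apply Theorem~\ref{th:varlength} to this subfamily. That theorem expresses the first variation of the geodesic length as the integral of the harmonic \ks tensor along the geodesic; the substance of the reduction is to check that, for a holomorphic family in one parameter, this variation is exactly the complex derivative $\partial\ell^i/\partial s^\kappa$ and that the normalization in Theorem~\ref{th:varlength} contributes the factor $1/2$ appearing in \eqref{eq:main1}. Since $\ell^i$ is real-valued, its differential splits as $d\ell^i=\partial\ell^i+\overline{\partial\ell^i}$, and I would track this Wirtinger decomposition carefully: the anti-holomorphic derivative $\partial\ell^i/\partial\ol s^\kappa$ pairs with $\overline{\mu_\kappa}$, and the two pieces recombine to give the real directional derivatives, providing a consistency check that the complex derivative is captured by $\tfrac12\int_{\gamma_i}\mu_\kappa$ alone.

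The step requiring the most care is the invariant meaning of $\int_{\gamma_i}\mu_\kappa$ and its compatibility with the output of Theorem~\ref{th:varlength}. One must contract the Beltrami differential with the complexified unit tangent of the geodesic, using the hyperbolic metric, and verify that the resulting line integral is independent of the parametrization and of the choice of local coordinate, so that it descends to a genuine invariant of the free homotopy class of $\gamma_i$. The genuinely hard analytic content, however, already resides in Theorem~\ref{th:varlength}: differentiating the length functional along the family splits into the variation of the metric---which contributes the harmonic Beltrami term---and the variation of the geodesic itself, the latter vanishing by the first-variation-of-arclength property of geodesics. Granting that theorem, the derivation of \eqref{eq:main1} is bookkeeping: specialize to $\dim_\C X=1$, restrict to the $s^\kappa$-line, and read off the holomorphic part together with the constant.
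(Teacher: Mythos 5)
Your proposal is correct and follows essentially the same route as the paper: Theorem~\ref{th:main1} is obtained there as an immediate consequence of Theorem~\ref{th:varlength}, using Proposition~\ref{pr:harm} (via \eqref{eq:muharm}, \eqref{eq:harmqua}) to identify the symmetric harmonic \ks tensor $A_{\kappa\ol\beta\ol\delta}$ with the harmonic Beltrami differential $\mu_\kappa$ contracted against the unit tangent of the geodesic. Your additional remarks on the Wirtinger decomposition and parametrization invariance are consistency checks rather than new content, and the factor $1/2$ indeed comes directly from the statement of Theorem~\ref{th:varlength}.
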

The above formula is independent of the choice of a set of \fn coordinates and holds for the length function $\ell^\gamma(s)$ of a differentiable family $\gamma(s)$ of closed geodesics.

In the classical case, closely related results, where the first variation of the length of a closed geodesic is expressed in terms of integrals, can be found in \cite{wolpert:comment,wolpert:fn-annals}.

The shearing technique for twists along closed geodesics plays an important role in \tei theory. We handle the twist parameters $\tau^i$ in the context of deformation theory and cocycles of vertical automorphisms. This reflects only one aspect of the complex earthquake map associated to a given closed geodesic in the sense of McMullen \cite{mm}  (cf.\ also \cite{e-m}).
The tangent vector $\partial/\partial \tau^i|_{s_0}$ of the \tei space is uniquely determined by its action on the cotangent space, i.e.\ the space of holomorphic quadratic differentials. (Here we use the type decomposition of the real tangent space of the \tei space as embedded into its complexification.)

\begin{theorem}\label{th:main2}
The tangent vector $\partial/\partial \tau^i|_{s_0}$ applied to a quadratic
holomorphic differential $\varphi=\varphi(z)dz^2$ is given by the integration
of the quadratic differential along the corresponding closed geodesic.
\begin{equation}\label{eq:main2}
\left(\left.\frac{\partial}{\partial \tau^i}\right|_{s_0}\!\!, \varphi(z)
dz^2\right)= \frac{\ii}{2} \int_{\gamma_i} \varphi.
\end{equation}
\end{theorem}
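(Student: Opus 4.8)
The plan is to represent the twist direction $\partial/\partial\tau^i$ by a Kodaira--Spencer class coming from the infinitesimal complex earthquake along $\gamma_i$, and then to show that its pairing with a holomorphic quadratic differential collapses, via Stokes, to a period integral along the geodesic. Concretely, I would lift $\gamma_i$ to the imaginary axis in the upper half plane $\mathbb{H}$ with covering transformation $z\mapsto e^\ell z$, and pass to the collar coordinate $\xi=\log z$, so that $\gamma_i$ sits at $\mathrm{Im}\,\xi=\pi/2$ and $\mathrm{Re}\,\xi$ is hyperbolic arclength. In this chart the infinitesimal twist is a \v{C}ech $1$-cocycle of vertical automorphisms supported on an annular neighbourhood $A$ of $\gamma_i$, represented by the holomorphic vector field generating the shift $\xi\mapsto\xi+\mathrm{const}$ along $A$; the complex earthquake packages twist and length into a single holomorphic complex-length coordinate, and it is this identification through the complex structure that will be responsible for the factor $\ii$.

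Next I would convert this cocycle to a Dolbeault representative: choose smooth potentials $W_\pm$ on the two sides of the cut with $W_+-W_-$ equal to the twist generator on $A$, and set $\mu=\db W_\pm$, a global Beltrami differential representing the class. Extracting the $(1,0)$-component dictated by the type decomposition of the real tangent space, the pairing becomes $(\partial/\partial\tau^i,\varphi)=\int_X\mu\,\varphi$. The key point is that $\varphi$ is holomorphic, so on each side $\mu\,\varphi=\db(W_\pm\varphi)$ is $\db$-exact; Stokes' theorem then annihilates the bulk contribution and leaves only the jump across the cut, collapsing the surface integral to the contour integral of $(W_+-W_-)\varphi$ over $\gamma_i$ --- that is, to the period of $\varphi$ against the twist cocycle. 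Geometrically this expresses that the twist is concentrated along $\gamma_i$, so its action on any holomorphic quadratic differential is a period.

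Finally I would evaluate the contour integral in the collar chart: the holomorphic twist generator contracts $\varphi=\varphi(\xi)\,d\xi^2$ to the $1$-form $\varphi(\xi)\,d\xi$, whose integral along $\{\mathrm{Im}\,\xi=\pi/2\}$ is exactly $\int_{\gamma_i}\varphi$ since $d\xi$ restricts to the arclength differential there. The prefactor $\tfrac{\ii}{2}$ should then emerge cleanly: the $\ii$ from the $(1,0)$-projection of the real twist expressed in the holomorphic complex-length coordinate, and the $\tfrac12$ from the hyperbolic normalization, consistent with the $\tfrac12$ appearing in Theorem~\ref{th:main1}. I expect the main obstacle to be twofold: first, fixing the normalization of the twist cocycle through the complex structure so that precisely $\tfrac{\ii}{2}$ (and not merely $\int_{\gamma_i}\varphi$) is produced --- this is where the complex earthquake and the vertical-automorphism description must be handled carefully; and second, justifying the Stokes reduction rigorously, since the potential $W$ is only piecewise defined and the collar is noncompact, which requires controlling the boundary terms and checking independence of the chosen cut and collar.
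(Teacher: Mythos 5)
Your proposal follows essentially the same route as the paper's proof: there, too, the infinitesimal twist is represented as a cocycle of vertical automorphisms for a two-element covering, converted into a Beltrami differential supported in an annular chart around the geodesic by means of a radial cutoff $h(r)$ in the trivialization $\zeta(z,\tau)=e^{\ii \tau h(r)/r_0}z$ (the round-annulus analogue of your collar strip with potentials $W_\pm$), and the pairing with $\varphi$ is collapsed to a geodesic integral -- the paper explicitly notes that this last step can be done either by Stokes' theorem, as you propose, or by letting $h'$ tend to a delta function. The normalization you deferred is supplied there by the explicit computation $\mu(z)=\frac{\ii}{2}\,\frac{z^2}{r r_0}\,h'(r)$, whose factor $\frac{\ii}{2}$ arises from $\partial_{\ol z}$ applied to the cutoff (equivalently, in your coordinates, from $\partial_{\ol\xi}\,\mathrm{Im}\,\xi = \ii/2$), confirming the constant you anticipated.
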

Note that also the above formula is independent of the choice of \fn coordinates. It holds for the twist vector $\pt/\pt \tau^\gamma$ for any given closed geodesic $\gamma$.

Observe that the above inner product is actually a product of the $(1,0)$-part
of $(\partial/\partial \tau^i)$ and the quadratic holomorphic differential
$\varphi$. As a corollary we obtain a key result proved by Wolpert for the classical case in \cite{wolpert:fn-annals}.

\begin{theorem}\label{cor:FNWP}
The \fn symplectic form on the \tei space is equal to the \wp form:
\begin{equation}\label{eq:FNWP}
\omega^{FN}=\omega^{WP} \text{, i.e. } -\sum_{i=1}^N d\tau^i\wedge d\ell^i =
\ii\; G_{\kappa\ol\lambda}^{WP}\; ds^\kappa\wedge ds^\ol\lambda.
\end{equation}
\end{theorem}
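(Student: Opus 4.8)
The plan is to deduce the identity of the two closed two-forms from the twist--length duality, which is the geometric heart of the matter. Since $(\ell^i,\tau^i)$ are genuine coordinates, the Fenchel--Nielsen form $\omega^{FN}=\sum_i d\ell^i\wedge d\tau^i$ satisfies $\iota_{\partial/\partial\tau^i}\omega^{FN}=-d\ell^i$ by inspection, so it suffices to prove the same contraction formula
\begin{equation*}
\iota_{\partial/\partial\tau^i}\,\omega^{WP}=-d\ell^i \qquad (i=1,\dots,N)
\end{equation*}
for the Weil--Petersson form, and then to control the difference $\omega^{WP}-\omega^{FN}$, which a priori can still carry a $d\ell\wedge d\ell$ term.

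To establish the contraction formula I would write the \ka form in the harmonic coordinates as $\omega^{WP}=\ii\,G_{\kappa\ol\lambda}^{WP}\,ds^\kappa\wedge ds^{\ol\lambda}$ and evaluate it on the pair $(\partial/\partial\tau^i,\partial/\partial s^\mu)$ and its conjugate. By the remark preceding the theorem, $\partial/\partial\tau^i$ is determined by its $(1,0)$-part $\sum_\kappa b_i^\kappa\,\partial/\partial s^\kappa$, and Theorem~\ref{th:main2}, paired against the quadratic differential $\varphi_\mu$ that is metric-dual to $\mu_\mu$ (so $\mu_\mu=\ol{\varphi_\mu}/\rho$ for the hyperbolic density $\rho$), pins down the $b_i^\kappa$ through the identification $(\mu_\kappa,\varphi_\mu)=G_{\kappa\ol\mu}^{WP}$ of the duality pairing with the $L^2$-metric. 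Theorem~\ref{th:main1} supplies the right-hand side, $\partial\ell^i/\partial s^\mu=\tfrac12\int_{\gamma_i}\mu_\mu$. With the factors $\tfrac12$ and $\tfrac{\ii}{2}$ from the two theorems, the desired equality collapses to the single pointwise-geometric identity
\begin{equation*}
\int_{\gamma_i}\mu_\mu=\ol{\int_{\gamma_i}\varphi_\mu},
\end{equation*}
relating the path integral of a harmonic Beltrami differential to that of its metric-dual holomorphic quadratic differential.

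I expect this last identity to be the main obstacle, and it is precisely where the hyperbolic geometry along the geodesic enters. I would prove it in the collar coordinate in which $\gamma_i$ is a horizontal circle and the metric is the model cylinder metric; there the relation $\mu_\mu=\ol{\varphi_\mu}/\rho$, once restricted to $\gamma_i$ and rewritten through arc length, makes the two integrands complex conjugate, so that integrating over one period yields conjugate values. This is the step in which the path-integral technology behind Theorem~\ref{th:varlength} does the real work and in which the argument stays intrinsic to the surface, bypassing the Kleinian-group machinery.

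Granting the contraction formula, the Lie derivative $\mathcal{L}_{\partial/\partial\tau^i}\omega^{WP}=d\,\iota_{\partial/\partial\tau^i}\omega^{WP}=-dd\ell^i=0$ shows every twist flow preserves $\omega^{WP}$; together with $\iota_{\partial/\partial\tau^i}(\omega^{WP}-\omega^{FN})=0$ this forces $\omega^{WP}-\omega^{FN}=\tfrac12\sum_{i,j} A_{ij}\,d\ell^i\wedge d\ell^j$ with $A_{ij}=\omega^{WP}(\partial/\partial\ell^i,\partial/\partial\ell^j)$ constant along the twist flows. The remaining, and genuinely non-formal, point is the vanishing of the $A_{ij}$, which is equivalent to $\omega^{FN}$ being of type $(1,1)$: testing both forms on the frame $\{\partial/\partial\tau^j,\,J\,\partial/\partial\tau^j\}$, the duality and the $J$-invariance of $\omega^{WP}$ match every pairing except $(J\partial/\partial\tau^i,J\partial/\partial\tau^j)$, whose matching is exactly $A_{ij}=0$. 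I would close this gap using the holomorphicity of McMullen's complex earthquake cited in the introduction, which couples the twist and the length variation along $\gamma_i$ into one holomorphic direction and thereby exchanges $\partial/\partial\tau^i$ with the length deformation under $J$, annihilating the $(2,0)$-component and yielding $\omega^{FN}=\omega^{WP}$.
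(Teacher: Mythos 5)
Your strategy coincides with the paper's own proof up to the last step, where it breaks down. The contraction identity $\iota_{\pt/\pt\tau^i}\,\omega^{WP}=-d\ell^i$ is exactly the Hamiltonian property \eqref{eq:ltaudual1}--\eqref{eq:ltaudual2}, which the paper likewise obtains by combining Theorems~\ref{th:main1} and~\ref{th:main2}. Moreover, the identity you single out as ``the main obstacle,'' $\int_{\gamma_i}\mu_\mu=\ol{\int_{\gamma_i}\varphi_\mu}$, is not an obstacle but a definitional remark: in the paper's conventions the geodesic integral of $\mu_\kappa$ is the tensor integral of $A_{\kappa\ol z\ol z}=g\,\mu_\kappa=\ol{\varphi_\kappa}$ (this is \eqref{eq:harmqua}), so along the unit-speed parameterization the integrand $\ol{\varphi_\kappa}(u)\,(\dot{\ol u})^2$ is pointwise the conjugate of $\varphi_\kappa(u)\,\dot u^2$; no collar model is needed. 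Your reduction of $\omega^{WP}-\omega^{FN}$ to a form $\tfrac12\sum A_{ij}\,d\ell^i\wedge d\ell^j$ whose coefficients are constant along twist orbits (via $\mathcal{L}_{\pt/\pt\tau^i}\omega^{WP}=d\iota_{\pt/\pt\tau^i}\omega^{WP}=0$) is also correct, and mirrors the paper's observation that only the $d\ell^k\wedge d\ell^m$ coefficients remain to be treated.

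The genuine gap is your final step, which is precisely the nontrivial residual point. McMullen's holomorphicity of the complex earthquake does not ``exchange $\pt/\pt\tau^i$ with the length deformation under $J$'': what it gives is that $J(\pt/\pt\tau^i)$ equals the infinitesimal \emph{grafting} direction along $\gamma_i$, which is not the coordinate field $\pt/\pt\ell^i$, and whose expansion in \fn coordinates (its $d\ell^k$- and $d\tau^k$-components) is unknown --- determining it is of the same order of difficulty as the statement being proved. If instead you read ``length deformation'' as $\mathrm{grad}_{WP}\,\ell^i$, then $J(\pt/\pt\tau^i)=-\mathrm{grad}_{WP}\,\ell^i$ indeed follows from your contraction formula alone (no earthquakes needed), but then the test of $J$-invariance on your frame is vacuous: $\omega^{WP}(J\pt/\pt\tau^i,J\pt/\pt\tau^j)=\omega^{WP}(\pt/\pt\tau^i,\pt/\pt\tau^j)=0$ holds automatically and imposes no condition whatsoever on $A_{ij}=\omega^{WP}(\pt/\pt\ell^i,\pt/\pt\ell^j)$, because $\mathrm{grad}_{WP}\,\ell^i$ is not proportional to $\pt/\pt\ell^i$. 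So nothing in your argument forces $A_{ij}=0$. The paper closes exactly this gap by an extra geometric input, namely Wolpert's symmetry argument \cite[Corollary 1.2]{wolpert:amj}: the pants decomposition gives rise to an orientation-reversing, anti-holomorphic involution $\sigma$ of the \tei space acting by $(\ell,\tau)\mapsto(\ell,-\tau)$ and satisfying $\sigma^*\omega^{WP}=-\omega^{WP}$, whence the $A_{ij}$ are odd in $\tau$; being twist-invariant (your Lie-derivative step), they vanish identically. Note that even a repaired earthquake argument --- grafting along the $\gamma_i$ preserves the reflection-symmetric locus $\{\tau=0\}$, where the grafting directions consequently lie in the span of the $\pt/\pt\ell^k$ --- would still rely on that same anti-holomorphic involution, so this symmetry input is unavoidable; you should replace the McMullen step by it.
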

In fact the formulas \eqref{eq:main1}, \eqref{eq:main2} and \eqref{eq:FNWP} are intimately related in the sense that any two of them imply the third one.

In the situation of Fuchsian groups, the above geodesic integral \eqref{eq:main2} seems to be related to the monodromy integral of the quotient of the given quadratic differential by a certain Abelian differential in \cite{wolpert:fn-annals} (cf.\ Section~\ref{sec:twist}).

Our approach can also be used to compute second variations -- our computations and applications will appear elsewhere.

{\it Acknowledgements.}
The second named author would like to thank Inkang Kim for important
discussions. The authors would like to thank the referee for pointing out the reference \cite{e-m}.

This paper was written at the University of Iceland. The second named author would like to express his thanks for the kind hospitality and support.

\section{Families of \ke Manifolds}
Let $\{\cX_s\}_{s\in S}$ be a holomorphic family of canonically polarized
compact complex manifolds parameterized by a (connected) complex space $S$. It
is given by a proper, smooth, holomorphic mapping $f:\cX \to S$ such that
$\cX_s=f^{-1}(s)$ for all $s\in S$. For simplicity we will assume that the base $S$ is smooth, although our results can also be given a meaning for possibly non-reduced singular base spaces.

We denote by $X=\cX_{s_0}$;  $s_0 \in S$ a distinguished fiber. Let
$n=\dim_\C X$, and denote by $z^\alpha$; $\alpha=1,\ldots, n$ local
holomorphic coordinates on $X$.

A Kähler form on $X$ will be denoted by
$$
\omega_X = \ii \gab dz^\alpha \wedge dz^\ol\beta
$$
On the fiber $X$ we are using the summation convention together with the $\nabla$-notation for covariant derivatives. A $|$--symbol will denote an ordinary derivative. Also, $\partial_\alpha$ and $\partial_\ol\beta$ will stand for $\partial/\partial z^\alpha$ and $\partial/\partial z^\ol\beta$ respectively. The raising and lowering of indices is defined as usual in terms of covariant derivatives.

For the Ricci tensor $R_{\alpha\ol\beta}$ of $X$ we will use the sign
convention
$$
R_\ab = - \log(g(z))_{|\ab},
$$
where $g(z)=\det(\gab(z))$.

Accordingly we set $g(z,s)= \det (g_\ab(z,s))$ for the family $f: \cX \to S$, where we equip the fibers $\cX_s$ with  \ke forms
$$
\omega_{\cX_s} = \ii g_\ab(z,s) dz^\alpha\wedge dz^\ol\beta
$$
of constant negative
curvature $-1$, i.e.\ $R_\ab(z,s) = - g_\ab(z,s)$.

Let
$$
\rho :T_{s_0}S \to H^1(X, \cT_X)
$$
be the \ks map for the corresponding deformation of $X$ over $S$ at the point
$s_0\in S$.

A natural inner product on the space $H^1(X, \cT_X)$ of infinitesimal
deformations of $X$, the {\it \wp}Hermitian inner product on $T_{s_0}S$, is
induced by the \ke metric $\omega_X$ on $X$. Namely, given tangent vectors $u,v \in T_{s_0}S$, we denote by $A_u= A_{u\ol\beta}^\alpha \partial_\alpha
dz^\ol\beta$ and $A_v$ the harmonic representatives of $\rho(u)$ and
$\rho(v)$ respectively. Then the inner product of $u$ and $v$ equals
$$
\langle u, v \rangle_{WP} = \int_X A_{u\ol\beta}^\alpha A_{\ol v
\gamma}^\ol\delta g_{\alpha\ol\delta}g^{\ol\beta\gamma} g \/ dV,
$$
where $A_\ol v$ denotes the adjoint (conjugate) tensor of $A_v$, and
$g\/ dV$ the volume element.

We note that the \wp inner product is positive definite at a given
point of the base, if the induced deformation is effective.

Let $s_j$; $j=1,\ldots, N$ be local coordinates on $S$, and set
$A_j= A_{\partial/\partial s_j}$. Then the \wp form on $S$ equals
$$
\omega^{WP} = \ii G_{i\ol\jmath}^{WP}(s) ds^i \wedge ds^\ol\jmath,
$$
where we use the notation
$$
G_{i\ol\jmath}^{WP}(s)= \langle\partial /\partial s^i, \partial
/\partial s^j\rangle_{WP} = \int_X A_{i\ol\beta}^\alpha A_{\ol
\jmath \gamma}^\ol\delta g_{\alpha\ol\delta}g^{\ol\beta\gamma} g \/
dV.
$$

A closed real $(1,1)$-form on the total space $\cX$ of the given
family is defined by
\begin{equation}\label{eq:omegaX}
\omega_\cX = - \ii \partial\ol\partial\log g(z,s).
\end{equation}
The \ke condition for the fibers implies that the restrictions of
$\omega_\cX$ to the fibers are the given \ke metrics:
$$
\omega_\cX|_{\cX_s}= \omega_{\cX_s}
$$
for all $s\in S$.

An important fact is that the family of \ke metric tensors contains
the harmonic representatives of \ks classes. The short exact
sequence
$$
0 \to \cT_{\cX/S} \to \cT_{\cX} \to f^*\cT_S \to 0
$$
induces the \ks map via the edge homomorphism for direct images. Again, we
use $\nabla$-notation for covariant derivatives as well as raising and
lowering of indices in fiber direction for families. A lift of a tangent vector
$\partial/\partial s^i$ of $S$ at $s_0$ as a differentiable vector
field of $\cX $ on $X$ is of the form
$$
\partial/ \partial s^i + b_i^\alpha \partial_\alpha.
$$
Its exterior $\ol\partial$-derivative $B_{i\ol\beta}^\alpha
\partial_\alpha dz^\ol\beta$, where $B_{i\ol\beta}^\alpha=\na\ol\beta b_{i}^\alpha
$, is interpreted as a $\ol\partial$-closed $(0,1)$-form on $X$ with
values in the tangent bundle of $X$. It represents the obstruction
against the existence of a holomorphic lift of the given tangent
vector, i.e.\ the infinitesimal triviality of the deformation.
Moreover,
\begin{equation}\label{eq:ks}
\rho(\partial/\partial s^i)= [B_{i\ol\beta}^\alpha \partial_\alpha
dz^\ol\beta] \in H^1(X,\cT_X).
\end{equation}

Our argument is based upon the notion of canonical lifts in the sense of Siu
(cf.\ \cite{siu:canlift}), which turned out to be $\omega_\cX$-horizontal: The
form $\omega_\cX$ is positive definite in fiber directions but need not be
positive on $\cX$. Its components in the directions of $z^\alpha$ and $s^j$ are denoted by $g_{\alpha\ol\jmath}$ etc. However, horizontal lifts of tangent
vectors are well-defined and of the form
\begin{equation}\label{eq:lift}
\partial/\partial s^i + a_i^\alpha \partial_\alpha,
\end{equation}
where
\begin{equation}\label{eq:liftcomp}
a_i^\alpha= -\gba g_{i\ol\beta}.
\end{equation}
We set $A_{i\ol\beta}^\alpha = \na\ol\beta a_{i}^\alpha$ and obtain (cf.
\cite{sch:curv,sch:teich}) the following fact:

\begin{proposition}\label{pr:harm}
The horizontal lifts with respect to $\omega_\cX$ of tangent vectors
induce harmonic representatives of \ks classes: The harmonic
representative of $\rho(\partial/\partial s^i)$ equals
$A_{i\ol\beta}^\alpha \partial_\alpha dz^\ol\beta$. These satisfy
the following properties
\begin{itemize}
\item[(i)]
$$
\na\ol\delta A^\alpha_{i\ol\beta}=\na\ol\beta A^\alpha_{i\ol\delta},
$$

\item[(ii)]
$$
\na\gamma A_{i\ol\beta}^\alpha g^{\ol\beta\gamma}=0,
$$
\item[(iii)]
$$
A_{i\ol\beta\ol\delta}=A_{i\ol\delta\ol\beta}.
$$
\end{itemize}
\end{proposition}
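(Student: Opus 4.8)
The plan is to read off all three identities from two structural facts: every component of the form $\omega_\cX$ in \eqref{eq:omegaX} is a second derivative of the single potential $\log g(z,s)$, and the fibrewise \ke equation reads $\pt_\alpha\pt_\ol\beta\log g=\gab$ (that is, $R_\ab=-\gab$). I would first observe that the horizontal lift \eqref{eq:lift} is an honest differentiable lift of $\pt/\pt s^i$, so that $A_{i\ol\beta}^\alpha\pt_\alpha dz^\ol\beta=\db(a_i^\alpha\pt_\alpha)$ holds locally. Since the difference $(a_i^\alpha-b_i^\alpha)\pt_\alpha$ of the horizontal lift and any other smooth lift is a globally defined vertical field on $X$, the tensors $A_{i\ol\beta}^\alpha$ and $B_{i\ol\beta}^\alpha$ of \eqref{eq:ks} differ by a $\db$-exact term; hence $A_{i\ol\beta}^\alpha\pt_\alpha dz^\ol\beta$ again represents $\rho(\pt/\pt s^i)$, and being locally $\db$-exact it is $\db$-closed. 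In components this closedness is precisely (i): as the metric is \ka the mixed Christoffel symbols vanish, so $\na\ol\delta A_{i\ol\beta}^\alpha=\pt_\ol\delta\pt_\ol\beta a_i^\alpha-\Gamma^\ol\mu_{\ol\delta\ol\beta}\pt_\ol\mu a_i^\alpha$, which is symmetric in $\ol\delta,\ol\beta$ since both the ordinary second derivative and $\Gamma^\ol\mu_{\ol\delta\ol\beta}$ are. Harmonicity then amounts to the additional condition $\dbs\!\left(A_{i\ol\beta}^\alpha\pt_\alpha dz^\ol\beta\right)=0$, which is exactly (ii).

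For (iii) I would lower the upper index. As $\gab$ is parallel and $g_{\alpha\ol\delta}a_i^\alpha=-g_{i\ol\delta}$ by \eqref{eq:liftcomp}, one obtains $A_{i\ol\beta\ol\delta}:=g_{\alpha\ol\delta}A_{i\ol\beta}^\alpha=\na\ol\beta\!\left(g_{\alpha\ol\delta}a_i^\alpha\right)=-\na\ol\beta g_{i\ol\delta}$. Now $g_{i\ol\delta}=\pt_i\pt_\ol\delta\log g$ has symmetric ordinary derivatives and the correction $\Gamma^\ol\mu_{\ol\beta\ol\delta}g_{i\ol\mu}$ is symmetric in $\ol\beta,\ol\delta$, so $\na\ol\beta g_{i\ol\delta}=\na\ol\delta g_{i\ol\beta}$, which is (iii).

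The essential point is (ii). Combining $A_{i\ol\beta\ol\delta}=-\na\ol\beta g_{i\ol\delta}$ with (iii), the vanishing of $g^{\ol\beta\gamma}\na\gamma A_{i\ol\beta}^\alpha$ is equivalent to $g^{\ol\beta\gamma}\na\gamma\na\ol\delta g_{i\ol\beta}=0$, and I would prove this by commuting the two covariant derivatives. In the term $g^{\ol\beta\gamma}\na\ol\delta\na\gamma g_{i\ol\beta}=\na\ol\delta\!\left(g^{\ol\beta\gamma}\na\gamma g_{i\ol\beta}\right)$ the mixed Christoffel symbols again drop out, so $g^{\ol\beta\gamma}\na\gamma g_{i\ol\beta}=g^{\ol\beta\gamma}\pt_\gamma g_{i\ol\beta}$; here the potential identity $\pt_\gamma g_{i\ol\beta}=\pt_i g_{\gamma\ol\beta}$ (both sides being $\pt_i\pt_\gamma\pt_\ol\beta\log g$) turns the contraction into $g^{\ol\beta\gamma}\pt_i g_{\gamma\ol\beta}=\pt_i\log g$, whose $\ol\delta$-derivative is $g_{i\ol\delta}$. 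In the remaining term $g^{\ol\beta\gamma}[\na\gamma,\na\ol\delta]g_{i\ol\beta}$ the curvature acting on the form index $\ol\beta$ is contracted by $g^{\ol\beta\gamma}$ into the Ricci tensor, and the \ke normalization $R_\ab=-\gab$ collapses it to a multiple of $g_{i\ol\delta}$ that cancels the first contribution. This yields (ii).

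The hard part will be this last curvature bookkeeping: one must check that contracting $[\na\gamma,\na\ol\delta]$ on the $(0,1)$-form $g_{i\ol\beta}$ by $g^{\ol\beta\gamma}$ reduces to the Ricci endomorphism and that its sign is exactly the one cancelling the $g_{i\ol\delta}$ produced by the potential identity. It is here, and only here, that the \ke hypothesis enters; for a general \ka family the horizontal lift would fail to be co-closed. The identities (i) and (iii), by contrast, use nothing beyond the \ka condition and the fact that $\omega_\cX$ arises from a single potential.
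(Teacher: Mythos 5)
Your argument is correct, but note that the paper itself offers no proof of Proposition~\ref{pr:harm}: it quotes the result from \cite{sch:curv,sch:teich}, where the argument (going back to Siu's canonical lifts) is in substance exactly your scheme --- the horizontal lift is one admissible differentiable lift, so its fibrewise $\db$-derivative represents the class \eqref{eq:ks}; (i) is $\db$-closedness; (iii) follows from lowering the index, $A_{i\ol\beta\ol\delta}=-\na\ol\beta g_{i\ol\delta}$, together with the potential structure; and (ii) follows by commuting covariant derivatives and invoking the Einstein condition. Since you explicitly left the final curvature bookkeeping open, let me confirm that it closes with the sign you need: for the $(0,1)$-form $g_{i\ol\beta}$ one has
\[
g^{\ol\beta\gamma}\,[\na\gamma,\na\ol\delta]\,g_{i\ol\beta}
\;=\; g^{\ol\mu\nu}\,R_{\nu\ol\delta}\;g_{i\ol\mu}
\;=\;-\,g_{i\ol\delta},
\]
because the contraction by $g^{\ol\beta\gamma}$ turns the curvature term into the Ricci tensor via the K\"ahler symmetry $R_{\nu\ol\beta\gamma\ol\delta}=R_{\gamma\ol\beta\nu\ol\delta}$, and $R_{\nu\ol\delta}=-g_{\nu\ol\delta}$; this exactly cancels the contribution $g^{\ol\beta\gamma}\na\ol\delta\na\gamma g_{i\ol\beta}=\na\ol\delta\bigl(g^{\ol\beta\gamma}\pt_\gamma g_{i\ol\beta}\bigr)=\pt_\ol\delta\pt_i\log g=g_{i\ol\delta}$, so (ii) holds. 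One small inaccuracy in your commentary: the claim that the \ke hypothesis enters ``here, and only here'' (in the curvature step) is not quite right. It already entered in your potential identity, since writing $\pt_\gamma g_{i\ol\beta}=\pt_i g_{\gamma\ol\beta}$ and summing the trace to $\pt_i\log g$ requires $\gab=\pt_\alpha\pt_\ol\beta\log g$, which is precisely the fibrewise equation $R_\ab=-\gab$ in the paper's normalization. So the Einstein condition is used twice in (ii), while (i) and (iii) are indeed unconditional, as you say.
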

The conditions (i) and (ii) above correspond to harmonicity, whereas  condition (iii) reflects the relationship with the metric tensor.

\section{Families of closed geodesics}
Let $(f: \cX \to S, \omega_\cX)$ be a family of \ke manifolds with
constant negative Ricci curvature equal to $-1$, where $\omega_\cX$ is given by \eqref{eq:omegaX}.

We denote by $\gamma_s$ a differentiable family of closed geodesics
contained in the fibers $\cX_s$. In order to describe the variation of the length of closed geodesics in a family, we use the notion of integrating a tensor along a geodesic. Exemplarily we define:
\begin{definition}
Let $C=C_{\ol\beta\ol\delta}$ be a tensor on the Kähler manifold $X$, and
$\gamma$ be a geodesic of length $\ell$, parameterized by $u(t)= (u^1(t),\ldots,u^n(t))$, such that $\|\dot u(t)\|=1$. Then
$$
\int_\gamma C = \int_\gamma C_{\ol\beta\ol\delta}dz^\ol\beta
dz^\ol\delta := \int_0^\ell C_{\ol\beta\ol\delta}(u(t)) \dot
u^\ol\beta \dot u^\ol\delta dt.
$$
\end{definition}
For contravariant tensors of order one this notation coincides with
the integration of a differential form along the curve $\gamma$.

The main result of this section is the following theorem:

\begin{theorem}\label{th:varlength}
Let $\cX \to S$ be a family of canonically polarized, compact,
complex manifolds equipped with \ke metrics. Let $\gamma_s$ be a
differentiable family of closed geodesics on the fibers $\cX_s$, having the lengths $\ell(s)$. Let $\partial/\partial s^i|_{s=s_0} \in T_{s_0}S$
be a tangent vector. Then the first variation of the length of the
closed geodesic with respect to the holomorphic parameter of the
family is half the integral of the symmetric, harmonic \ks tensor along
the geodesic:
$$
\left.\frac{\partial}{\partial s^i}\right|_{s=s_0}\!\!\!\ell(s)=
\frac{1}{2} \int_{\gamma(s_0)} A_{i\ol\beta\ol\delta}dz^\ol\beta
dz^\ol\delta. % = \int_0^{\ell(s_0)}A_{u\ol\beta\ol\delta}(u(t))\dot
% u(t)^\ol\beta\dot u(t)^\ol\delta dt
$$
\end{theorem}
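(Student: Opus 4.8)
The plan is to combine the classical first variation of arc length --- in which the motion of the curve contributes nothing because $\gamma(s_0)$ is a \emph{closed geodesic} --- with the two structural identities satisfied by the $\omega_\cX$-horizontal lift, and then to perform a covariant integration by parts along the geodesic. First I would parameterize the family over a fixed domain, $\sigma\mapsto\gamma(\sigma,s)$, and write $\ell(s)=\int\|\partial_\sigma\gamma(\sigma,s)\|_{g(\cdot,s)}\,d\sigma$ with $\|\,\cdot\,\|^2=g_{\ab}(\gamma,s)\,\partial_\sigma\gamma^\alpha\partial_\sigma\gamma^\ol\beta$. Differentiating at $s_0$ with respect to the holomorphic parameter $s^i$ (as a Wirtinger derivative) and passing to the arc length parameter $t$, the derivative splits into a group of terms coming from the motion of the curve and one term coming from the explicit $s$-dependence of the metric coefficients. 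The curve-motion group is precisely the first variation of the length functional for the fixed metric $g(\cdot,s_0)$; since $\gamma(s_0)$ is a geodesic ($\frac{D}{dt}\dot u=0$) and closed (no endpoints), it assembles into the integral of a total derivative and vanishes. What survives is
$$
\left.\frac{\partial\ell}{\partial s^i}\right|_{s_0}=\frac12\int_0^{\ell}(\partial_i g_{\ab})\,\dot u^\alpha\dot u^\ol\beta\,dt .
$$
The hard part is this reduction: one has to verify that the curve-variation terms really do organize covariantly into $\frac{d}{dt}\langle V,\dot u\rangle$ (so that $\frac{D}{dt}\dot u=0$ and periodicity kill them), and to check that the complex parameter only enters through the holomorphic derivative $\partial_i g_{\ab}$ of the coefficient function.

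It then remains to convert the mixed expression $\partial_i g_{\ab}\,\dot u^\alpha\dot u^\ol\beta$ into the $(0,2)$-expression $A_{i\ol\beta\ol\delta}\dot u^\ol\beta\dot u^\ol\delta$. Set $(a_i)_\ol\beta=\gab\, a_i^\alpha=-g_{i\ol\beta}$, the lowered component of the horizontal lift \eqref{eq:liftcomp}. Two pointwise identities are available. Because $\omega_\cX=-\ii\partial\ol\partial\log g$ has a global potential, its components obey the integrability relation $\partial_\alpha g_{i\ol\beta}=\partial_i g_{\ab}$, so that (the \ka Christoffel symbols $\Gamma^{\ol\gamma}_{\alpha\ol\beta}$ vanishing)
$$
\na\alpha (a_i)_\ol\beta=\partial_\alpha(a_i)_\ol\beta=-\partial_i g_{\ab}.
$$
On the other hand, metric compatibility together with the description $A_{i\ol\beta}^\alpha=\na\ol\beta a_i^\alpha$ of Proposition~\ref{pr:harm} gives
$$
\na\ol\delta (a_i)_\ol\beta=\gab\,\na\ol\delta a_i^\alpha=A_{i\ol\delta\ol\beta},
$$
the symmetric harmonic \ks tensor.

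Finally I would evaluate these two identities along the geodesic. By construction the two relations are exactly the holomorphic and antiholomorphic halves of the covariant derivative of $(a_i)_\ol\beta$, so their difference is a covariant derivative along the curve:
$$
\partial_i g_{\ab}\,\dot u^\alpha\dot u^\ol\beta-A_{i\ol\delta\ol\beta}\dot u^\ol\delta\dot u^\ol\beta
=-\bigl(\dot u^\alpha\na\alpha+\dot u^\ol\delta\na\ol\delta\bigr)(a_i)_\ol\beta\cdot\dot u^\ol\beta
=-\frac{D}{dt}(a_i)_\ol\beta\cdot\dot u^\ol\beta .
$$
Using the geodesic equation $\frac{D}{dt}\dot u^\ol\beta=0$, the right-hand side equals $-\frac{d}{dt}\bigl((a_i)_\ol\beta\dot u^\ol\beta\bigr)$, a total derivative whose integral over the closed geodesic vanishes. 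Hence the two integrals coincide, and the surviving first-variation term becomes $\frac12\int_\gamma A_{i\ol\beta\ol\delta}\,dz^\ol\beta dz^\ol\delta$ (after relabeling, using the symmetry in Proposition~\ref{pr:harm}(iii)), which is the assertion. The only point demanding care with constants is the normalization of the metric and of arc length; with the Hermitian normalization $\|\dot u\|^2=g_{\ab}\dot u^\alpha\dot u^\ol\beta$ the leading constant comes out to be exactly $1/2$.
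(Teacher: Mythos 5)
Your proof is correct and takes essentially the same route as the paper: both reduce the first variation to $\frac{1}{2}\int_0^{\ell_0}\frac{\partial}{\partial s}\|\dot u\|^2\,dt$ and then exhibit the integrand as $A_{i\ol\beta\ol\delta}\dot u^\ol\beta\dot u^\ol\delta$ plus the $t$-derivative of a function that is periodic along the closed geodesic. In fact your two exact terms, the first-variation boundary term $\langle V,\dot u\rangle$ and the potential term $g_{i\ol\beta}\dot u^\ol\beta=-(a_i)_\ol\beta\dot u^\ol\beta$, sum precisely to the paper's function $\chi=\langle U_s,\dot u\rangle$, so your argument is an explicit, well-organized unpacking of the computation behind Lemma~\ref{le:diffchidot}.
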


In order to prove the theorem, we may assume that $S=\{s\}$ is a
disk in $\C$, and $s_0=0$. The closed geodesic curves
$\gamma_s$ are solutions of the equation for geodesics
\begin{equation}\label{eq:geodesic}
\ddot u^\alpha(t,s) + \Gamma^\alpha_{\gamma\sigma}(u(t,s))\dot
u^\gamma(t,s)\dot u^\sigma(t,s) =0.
\end{equation}
The solution is unique up to the constant value of the speed
$$
\|\dot u(t,s)\|^2= {\gab(u(t,s),s)\dot u^\alpha(t,s)\dot
u^\ol\beta(t,s)}.
$$
For $s=0$ we chose $\|\dot u\|=1$, for the remaining values of $s$ the value of $\|\dot u\|$ will be determined by the fact that the parameter $t$ assumes
values in the interval $[0, \ell_0]$, where $\ell_0$ is the length of
$\gamma_0$. Now
$$
\ell(s)= \int_0^{\ell_0} \|\dot u(t,s)\| dt
$$
so that
\begin{equation}\label{eq:dls}
\left.\frac{d\ell(s)}{ds}\right|_{s=0}= \frac{1}{2} \int_0^{\ell_0}
\frac{d}{ds}\|\dot u(t,s)\| ^2 dt.
\end{equation}
We need to study various tensors on $\cX$ along the geodesics. In
particular, we consider the vector field
$$
U_s(t,s) = \left.\left(\frac{\partial}{\partial s} +u_s^\alpha(t,s)
\frac{\partial}{\partial z^\alpha} + u_s^\ol\beta(t,s)\frac{\partial}{\partial
z^\ol\beta}\right)\right|_{u(t,s)},
$$
where
$$
u_s^\alpha(t,s) =\frac{\partial u^\alpha(t,s)}{\partial s} \text{
and } u_s^\ol\beta(t,s) =\frac{\partial u^\ol\beta(t,s)}{\partial
s}\,.
$$
The function
$$
\chi(t,s) = \langle U_s, \dot u \rangle
$$
is defined as the inner product of vector fields along the geodesics with
values on the total space $\cX$. Concerning $\omega_\cX$ we indicate the
direction of the one-dimensional space $S$ by $s$. We have
$$
\chi(t,s) = g_{s\ol\beta} \dot u^\ol\beta + \gab u_s^\alpha\dot
u^\ol\beta + \gab \dot u^\alpha u_s^\ol\beta.
$$
Using \eqref{eq:geodesic} we simplify the expression for the
derivative $\dot \chi$ and get:
\begin{equation}\label{eq:chidot}
\begin{split}
\dot \chi(t,s) = g_{s\ol\beta|\alpha} \dot u^\alpha \dot u^\ol\beta
+ g_{s\ol\beta|\ol\delta} \dot u^\ol\beta \dot u^\ol\delta +
g_{s\ol\beta} \ddot u^\ol\beta  + g_{\alpha\ol\beta|\gamma} \dot
u^\gamma u_s^\alpha \dot u^\ol\beta +
\\
\gab \dot u_s^\alpha \dot u^\ol\beta +
g_{\alpha\ol\beta|\ol\delta}\dot u^\alpha u_s^\ol\beta \dot
u^\ol\delta + \gab \dot u^\alpha \dot u_s^\ol\beta.
\end{split}
\end{equation}
\begin{lemma}\label{le:diffchidot}
$$
\frac{\partial}{\partial s}\|\dot u\|^2 - \dot \chi = A_{s\ol\beta\ol\delta}
\dot u^\ol\beta\dot u^\ol\delta.
$$
\end{lemma}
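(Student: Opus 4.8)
The plan is to compute the holomorphic $s$-derivative of $\|\dot u\|^2$ directly, subtract the expression \eqref{eq:chidot} for $\dot\chi$, and watch almost everything cancel, leaving a remainder that I then recognize as the covariant derivative of the \ks tensor contracted along $\gamma$.

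First I would differentiate $\|\dot u\|^2=\gab(u(t,s),s)\,\dot u^\alpha\dot u^\ol\beta$. Since $\gab$ depends on $s$ both through the curve $u(t,s)$ and explicitly, and since $\partial_s\dot u^\alpha=\partial_t u_s^\alpha=\dot u_s^\alpha$, the chain rule gives
\[
\frac{\partial}{\partial s}\|\dot u\|^2=\bigl(g_{\alpha\ol\beta|\gamma}u_s^\gamma+g_{\alpha\ol\beta|\ol\delta}u_s^\ol\delta+g_{\alpha\ol\beta|s}\bigr)\dot u^\alpha\dot u^\ol\beta+\gab\,\dot u_s^\alpha\dot u^\ol\beta+\gab\,\dot u^\alpha\dot u_s^\ol\beta.
\]
Subtracting \eqref{eq:chidot} term by term, the two $\dot u_s$-terms cancel at once, while $g_{\alpha\ol\beta|\gamma}u_s^\gamma\dot u^\alpha\dot u^\ol\beta$ and $g_{\alpha\ol\beta|\ol\delta}u_s^\ol\delta\dot u^\alpha\dot u^\ol\beta$ cancel against the matching terms of $\dot\chi$ after relabeling, using the \ka\ symmetries $g_{\alpha\ol\beta|\gamma}=g_{\gamma\ol\beta|\alpha}$ and $g_{\alpha\ol\beta|\ol\delta}=g_{\alpha\ol\delta|\ol\beta}$. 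The decisive step is the cancellation of $g_{\alpha\ol\beta|s}\dot u^\alpha\dot u^\ol\beta$ against the term $g_{s\ol\beta|\alpha}\dot u^\alpha\dot u^\ol\beta$ of $\dot\chi$: because $\omega_\cX=-\ddb\log g$ is globally of potential type, the mixed third derivatives of $\log g$ commute, whence $g_{\alpha\ol\beta|s}=g_{s\ol\beta|\alpha}$. After these cancellations only
\[
\frac{\partial}{\partial s}\|\dot u\|^2-\dot\chi=-g_{s\ol\beta|\ol\delta}\dot u^\ol\beta\dot u^\ol\delta-g_{s\ol\beta}\ddot u^\ol\beta
\]
survives.

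It then remains to identify this remainder with $A_{s\ol\beta\ol\delta}\dot u^\ol\beta\dot u^\ol\delta$. Lowering the index on the horizontal lift \eqref{eq:liftcomp} gives $g_{\alpha\ol\delta}a_s^\alpha=-g_{s\ol\delta}$, and since $\na\ol\beta$ annihilates the metric, $A_{s\ol\beta\ol\delta}=g_{\alpha\ol\delta}\na\ol\beta a_s^\alpha=\na\ol\beta(-g_{s\ol\delta})=-g_{s\ol\delta|\ol\beta}+\Gamma^\ol\epsilon_{\ol\beta\ol\delta}g_{s\ol\epsilon}$. Contracting with $\dot u^\ol\beta\dot u^\ol\delta$ and using the conjugate geodesic equation in the form $\Gamma^\ol\epsilon_{\ol\beta\ol\delta}\dot u^\ol\beta\dot u^\ol\delta=-\ddot u^\ol\epsilon$ replaces the Christoffel contribution by $-g_{s\ol\epsilon}\ddot u^\ol\epsilon$, and a relabeling reproduces the surviving right-hand side exactly. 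The main obstacle is keeping the index bookkeeping honest and, above all, recognizing that the geodesic equation must be invoked a second time---now in its barred form---to turn the Christoffel term inside $\na\ol\beta g_{s\ol\delta}$ into the $g_{s\ol\beta}\ddot u^\ol\beta$ contribution; the potential property of $\omega_\cX$ is the other indispensable ingredient, since it is precisely what eliminates the explicit $s$-derivative of the fibre metric.
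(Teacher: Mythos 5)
Your proposal is correct and is precisely the calculation the paper's proof alludes to but leaves out: it combines the geodesic equation (in its barred form), the expression \eqref{eq:chidot}, and the lowered horizontal-lift relation $g_{\alpha\ol\delta}a_s^\alpha=-g_{s\ol\delta}$ from \eqref{eq:liftcomp}, with the commutation $g_{\alpha\ol\beta|s}=g_{s\ol\beta|\alpha}$ coming from the fact that all components of $\omega_\cX$ derive from the single potential $\log g(z,s)$. The index bookkeeping, the K\"ahler symmetries, and the identification of the remainder with $A_{s\ol\beta\ol\delta}\dot u^\ol\beta\dot u^\ol\delta$ all check out, so this is a faithful and complete filling-in of the paper's one-line proof.
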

\begin{proof}
We use \eqref{eq:geodesic} and \eqref{eq:chidot} together with
\eqref{eq:lift} and \eqref{eq:liftcomp} and apply
Proposition~\ref{pr:harm}. The statement now follows from a
calculation.
\end{proof}
The {\it proof} of Theorem~\ref{th:varlength} concludes with
\eqref{eq:dls} and Lemma~\ref{le:diffchidot}.

\section{Cocycles of vertical automorphisms}\label{sec:vaut}
Let $X$ be a compact complex manifold, and $(S,s_0)$ a germ of a complex space. We follow the approach of Forster and Knorr \cite{fo-kn} and assign to any
deformation of $X$ over a $(S,s_0)$ an element of the first cohomology
$H^1(X,\mathfrak{G}_S)$ of $X$ with values in the sheaf $\mathfrak{G}_S$ of
vertical automorphisms. Its derivative with respect to the base space yields
the \ks map. For any deformation of $X$ given by a cartesian diagram
$$
\xymatrix{X \ar[r] \ar[d] & \cX \ar[d]^f\\ 0 \ar[r] & (S,s_0)}
$$
there exist open coverings $\{U_i\}$ of $X$  and $\{Z_i\}$ of $\cX$ together
with isomorphisms $\psi_i: U_i \times S \to Z_i$, compatible with the cartesian diagram, and equal to the identity for $s=0$. The actual cocycles $\gamma_{ij}$ are defined on $(U_i\cap U_j )\times S$ by $\psi_j^{-1}\circ \psi_i$. We look at the derivatives of the $\gamma_{ij}$ with respect to tangent vectors of the
base and values in the sheaf of holomorphic vector fields. These define the \ks classes and will be compared with suitable $(0,1)$-forms representing the same
classes.

We take a differentiable trivialization  $\zeta :\cX \to X \times S$ of the
family, which equals the identity for $s=s_0$. Then, according to \eqref{eq:ks}, the class $\rho(\partial/\partial s^\kappa)|_{s=s_0}$ is represented by
\begin{equation}\label{eq:ksdol}
\left.\frac{\partial^2\zeta^\alpha(z,s)}{\partial s^\kappa\partial
z^\ol\beta}\right|_{s=s_0}\!\frac{\partial}{\partial
z^\alpha}dz^\ol\beta.
\end{equation}
We now consider the situation where the covering that is involved
in the deformation only consists of only two elements, $U_1$ and $U_2$,
say. A differentiable trivialization of the holomorphic family can
be chosen to be equal to the identity on the complement of any given
compact neighborhood of $Z_1\cap Z_2\subset \cX$.

\section{Application to Riemann surfaces}
For families of Riemann surfaces (with notation from Section~\ref{sec:intro})
the above Proposition~\ref{pr:harm} states that harmonic Beltrami differentials are of the form
\begin{equation}\label{eq:muharm}
\mu_\kappa=\mu_\kappa(z)\frac{\partial}{\partial z} \ol{dz}= -
\frac{\partial}{\ol{\partial z}}\left(\frac{1}{g} \frac{\partial^2 \log
g}{\partial s^\kappa \ol{\partial z}}\right)\frac{\partial}{\partial z}
\ol{dz}.
\end{equation}
We write these also in the form
\begin{equation}\label{eq:harmqua}
\mu_\kappa(z)\frac{\partial}{\partial z} \ol{dz} =
\frac{\varphi_\ol\kappa(z)}{g(z)} \frac{\partial}{\partial z} \ol{dz}
\end{equation}
for certain quadratic holomorphic differentials
$\varphi_\kappa=\varphi_\kappa(z)dz^2$, with $\varphi_\ol\kappa
:=\ol{\varphi_\kappa}$.

Now Theorem~\ref{th:main1} is an immediate consequence of Theorem~\ref{th:varlength}.

\section{Twist coordinates vs.\ holomorphic coordinates}\label{sec:twist}
In this section we will prove Theorem~\ref{th:main2}. We will first treat the
case of Riemann surfaces $X$ equipped with the hyperbolic metrics
$g=g(z)|dz|^2$. Given the context, we normalize a metric of constant curvature
$-1$ so that the Ricci curvature equals $-1$, i.e.\ $\partial^2\log
g(z)/\partial z \ol{\partial z}= g(z)$.

Our point is to give a direct proof, which only uses the hyperbolic geometry of Riemann surfaces.

We consider a closed geodesic $\gamma$ on $X$ parameterized by
$u(t)$ with $\|\dot u(t)\|=1$. Let the length of $\gamma$ be $\ell_0
= 2\pi r_0$. The twist along $\gamma$ gives rise to a real, one
parameter family of Riemann surfaces, in particular it gives rise to
a real (analytic) curve in the appropriate \tei space.

As local coordinate set we use an annulus $W=\{z\in \C ; r_1 < |z| < r_2\}$
embedded into $X$, such that $\gamma$ corresponds to a circle $\{ |z|= r_0 \}$
for $r_1<r_0<r_2$, with parameterization $u(t)= r_0 e^{\ii t/r_0}$ for $0 \leq
t \leq \ell_0 $.

For $\tau \in \R$ a family of holomorphic automorphisms of $W$ is given by the
twist $z \mapsto  e^{\ii \tau/r_0} z$. In the sense of Section~\ref{sec:vaut}
we interpret these as a family of vertical automorphisms with respect to an
open covering of $X$ consisting of two elements. Now a differentiable
trivialization (which is equal to the identity on the complement of $W$) is
given by
\begin{equation}\label{eq:triv}
\zeta(z,\tau)= e^{\ii \frac{\tau}{r_0} h(r)}z,
\end{equation}
where $r=|z|$ and $h\in \cinf((r_1,r_2),\R)$ denotes a differentiable
function which is identically zero near $r_1$, and identically equal to one
near $r_2$. The value $\tau= \ell_0$ represents a Dehn twist.

We take a deformation theoretic standpoint, and we use in this context the shearing technique \cite[Section~1]{wolpert:fn-annals}: A Beltrami differential will again stand for an infinitesimal deformation. If we replace in \eqref{eq:ksdol} a tangent vector $\partial/\partial s^\kappa$ by a real tangent vector
$\partial/\partial\tau$, we obtain the image of its $(1,0)$-component under the \ks map. The resulting Beltrami differential $\mu=\mu(z)(\partial/\partial z)
\ol{dz}$ is identically zero on the complement of $W$. For  $z \in W$ it
follows immediately that
\begin{equation}\label{eq:beltau}
\mu(z)=\frac{\ii}{2} \frac{z^2}{r r_0}h'(r).
\end{equation}

Now we can prove Theorem~\ref{th:main2}. The idea is to determine a Beltrami differential as a linear form on the cotangent space of the \tei space.

The duality  $H^1(X,\cT_X) \times H^0(X, \cO_X(2K_X))\to \C$ of tangent and
cotangent spaces of $\cT$ is given by
$$
\left(\mu(z)\frac{\partial}{\partial z} \ol{dz},
\varphi(z)dz^2\right) \mapsto \int_X \mu(z) \varphi(z) dA,
$$
where $dA =\ii dz \wedge \ol{dz}$ is the Euclidean area element. Observe that
$\mu$ need not be harmonic in this expression, in fact we get the same value
for any function $h(r)$ with the above properties. The claim of the theorem
follows from \eqref{eq:beltau} in principle by Stokes' theorem -- we can also
let $h$ tend to the function $h_0$, which is defined to be equal to zero for
$r<r_0$ and equal to one for $r\geq r_0$.  Then $h'(r)$ converges to a delta function, and integration over $r$ implies that that for any quadratic
holomorphic differential $\varphi$
$$
(\mu,\varphi)= \frac{\ii}{2}\;r_0 \int_0^{2\pi} e^{2\ii \vartheta} \varphi(r_0
e^{\ii \vartheta})\/ \/d\vartheta.
$$
Now the necessary reparameterization $\vartheta=t/r_0$ yields the claim of the
theorem. \qed

We mention the monodromy integrals from \cite{wolpert:fn-annals}. If $A$ is a hyperbolic element in $PSL(2,\R)$, lifted to $\wh A\in SL(2,\R)$, the monodromy integral is
$$
\cP(\psi,\wh A)=\frac{1}{2} \int_s^{As} \psi \Omega_{\wh A}
$$
where $\psi$ is a quadratic holomorphic differential, and $\Omega_{\wh A}^{-1}$ a suitable Abelian differential. Then by \cite[Lemma~4.2]{wolpert:fn-annals} the following formula holds for the twist tangent vector $\mu_0$ applied to a holomorphic quadratic differential
$$
\int_\Delta \mu_0 \psi =- \ii \sigma_{\wh A} {\eta(\wh A)}^{-1}\cP(\psi,\wh A),
$$
where $\Delta$ is a fundamental domain, $\sigma_{\wh A}=\pm 1$, and $\eta(\wh A)= ((\tr \wh A)^2-4)^{1/2}$.

\section{Computation of the \fn symplectic form}
The results of this section are valid for the \tei space of (possibly weighted) punctured Riemann surfaces.

Combining our geodesic integral formulas \eqref{eq:main1} and \eqref{eq:main2} we immediately obtain a result, which was proved in the classical case by Wolpert (\cite[Theorem 2.10]{wolpert:fn-annals}).

\begin{corollary}\label{cor:main1}
The vector fields $\partial/\partial \tau^j$ are Hamiltonian with respect to the \wp form:
\begin{eqnarray}
  \frac{\partial\ell^j}{\partial s^\ol\kappa}&=&-\ii \left\langle \frac{\partial}{\partial
  \tau^j}, \mu_\kappa\right\rangle, \label{eq:ltaudual1} \\
  \frac{\partial\ell^j}{\partial s^\kappa}&=& \ii \left\langle \mu_\kappa , \frac{\partial}{\partial
  \tau^j}\right\rangle. \label{eq:ltaudual2}
\end{eqnarray}
\end{corollary}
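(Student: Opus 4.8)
The plan is to read \eqref{eq:ltaudual1} and \eqref{eq:ltaudual2} as nothing more than a repackaging of Theorems~\ref{th:main1} and~\ref{th:main2}, after the \wp Hermitian pairing has been rewritten as the duality pairing between $H^1(X,\cT_X)$ and the space $H^0(X,\cO_X(2K_X))$ of holomorphic quadratic differentials. The two asserted identities are complex conjugates of one another, so it suffices to prove one of them; the other follows from the Hermitian symmetry of $\langle\,\cdot\,,\,\cdot\,\rangle$ together with the reality of the length function $\ell^j$, which gives $\partial\ell^j/\partial s^\ol\kappa=\overline{\partial\ell^j/\partial s^\kappa}$.

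First I would translate the \wp inner product into the language of quadratic differentials. Specializing the defining integral of $\langle\,\cdot\,,\,\cdot\,\rangle_{WP}$ to the surface case $n=1$ gives $\langle\nu,\mu\rangle_{WP}=\int_X\nu\,\overline{\mu}\,g\,dV$ for Beltrami differentials $\nu,\mu$. Writing the harmonic differential as $\mu_\kappa=\varphi_\ol\kappa/g$ according to \eqref{eq:harmqua}, and using that $g$ is real together with $\overline{\varphi_\ol\kappa}=\varphi_\kappa$, the conjugate $\overline{\mu_\kappa}=\varphi_\kappa/g$ cancels the metric factor. Hence, with $\nu_j$ the $(1,0)$-part of $\partial/\partial\tau^j$,
$$
\left\langle\frac{\partial}{\partial\tau^j},\mu_\kappa\right\rangle=\int_X\nu_j\,\varphi_\kappa\,dV=\left(\frac{\partial}{\partial\tau^j},\varphi_\kappa\right),
$$
and Theorem~\ref{th:main2} identifies the right-hand side with $\tfrac{\ii}{2}\int_{\gamma_j}\varphi_\kappa$.

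Next I would compare the two geodesic integrals appearing in the two theorems. Lowering the index of the harmonic \ks tensor gives, for $n=1$, the identity $A_{\kappa\ol z\ol z}=g\,\mu_\kappa=\varphi_\ol\kappa=\overline{\varphi_\kappa}$, so that by the definition of integration along $\gamma$ and the relation $\dot u^\ol z=\overline{\dot u^z}$ one obtains $\int_{\gamma_j}\mu_\kappa=\overline{\int_{\gamma_j}\varphi_\kappa}$. Combined with Theorem~\ref{th:main1} this yields $\partial\ell^j/\partial s^\kappa=\tfrac12\,\overline{\int_{\gamma_j}\varphi_\kappa}$. Substituting the expression from the previous paragraph and tracking the factors of $\ii$ then gives \eqref{eq:ltaudual2}, and \eqref{eq:ltaudual1} follows by complex conjugation.

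The genuine mathematical content is already contained in Theorems~\ref{th:main1} and~\ref{th:main2}; the only obstacle here is careful bookkeeping. One must fix once and for all which slot of the Hermitian form is conjugate-linear, keep track of the single complex conjugation relating $\int_\gamma\mu_\kappa$ to $\int_\gamma\varphi_\kappa$, and reconcile the normalization of the volume element $dV$ with the Euclidean area element $dA=\ii\,dz\wedge\ol{dz}$ entering the duality pairing. A misplaced conjugate or factor of $\ii$ would interchange the signs $\mp\ii$ in the two equations, so the whole verification hinges on performing this tracking consistently.
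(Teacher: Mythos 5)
Your proposal is correct and follows exactly the paper's route: the paper proves this corollary simply by combining the geodesic integral formulas \eqref{eq:main1} and \eqref{eq:main2}, which is precisely what you do, with the conjugation and normalization bookkeeping (the identity $\int_{\gamma_j}\mu_\kappa=\overline{\int_{\gamma_j}\varphi_\kappa}$ via $A_{\kappa\ol z\ol z}=g\mu_\kappa=\ol{\varphi_\kappa}$, and the type/Hermitian-symmetry reductions) spelled out explicitly. Your observation that \eqref{eq:ltaudual1} and \eqref{eq:ltaudual2} are complex conjugates of one another, using the reality of $\ell^j$, is also consistent with how the paper treats the pair of equations.
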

Here, the \wp inner product, which is defined initially on the complex tangent
space, is extended in a natural way to the complexified real tangent space
(which also contains the complex tangent vectors of type $(0,1)$).

%\hrule

Next, we prove the coincidence \eqref{eq:FNWP} of the \fn and \wp symplectic
forms using \eqref{eq:main1} and \eqref{eq:main2}. We will give a short
computational argument:

\medskip

\noindent {\it Proof of Theorem~\ref{cor:FNWP}.} We write $\omega^{WP}$ in
terms of \fn coordinates and begin with the contribution of the twist parameter involving $d\tau^j\wedge d\tau^k$. Since we are dealing with real forms, we
need to show:

\smallskip
\noindent {\it Claim.} {\it The real part
$$
\text{\rm Re}\left(\ii\; \left\langle \frac{\pt}{\pt s^\alpha},\frac{\pt}{\pt s^\beta}
\right\rangle  \frac{\pt s^\alpha}{\pt \tau^j} \frac{\pt s^\ol\beta}{\pt
\tau^k}\right)
$$
vanishes, in particular, it is symmetric in $j$ and $k$.}

\medskip

\noindent {\it Proof of the claim}:
%\begin{footnotesize}
\begin{eqnarray*}
\ii\; \left\langle \frac{\pt}{\pt s^\alpha},\frac{\pt}{\pt s^\beta}
\right\rangle  \frac{\pt s^\alpha}{\pt \tau^j} \frac{\pt s^\ol\beta}{\pt
\tau^k} &=& \ii \; \left\langle \frac{\pt}{\pt s^\alpha},\frac{\pt s^\beta}{\pt
\tau^k} \frac{\pt}{\pt s^\beta} \right\rangle  \frac{\pt s^\alpha}{\pt
\tau^j}\\
&=& \ii \; \left\langle \frac{\pt}{\pt s^\alpha},\frac{\pt}{\pt \tau^k}
\right\rangle
\frac{\pt s^\alpha}{\pt \tau^j} \text{\footnotesize\quad (type consideration) }\\
& =\atop\eqref{eq:ltaudual2}& \frac{\pt \ell^k}{\pt s^\alpha}\cdot \frac{\pt
s^\alpha}{\pt \tau^j}.
\end{eqnarray*}
%\end{footnotesize}
The real part of this quantity equals
$$
\frac{1}{2} \frac{\pt\ell^k}{\pt\tau^j}=0.\eqno{\square}
$$

\smallskip
We conclude the proof of Theorem~\ref{cor:FNWP}. The vanishing of the
coefficients of $d\ell^k\wedge d\ell^m$ is an immediate consequence of the
\lq\lq dual\rq\rq\ equation \eqref{eq:detaues}, which may not be available at this point. However, we can revert here to Wolpert's purely (differential) geometric
argument \cite[Corollary 1.2]{wolpert:amj}, which involves orientation
reversing (anti-holomorphic) symmetries.

Finally, the coefficient of $d\tau^j\wedge d\ell^m$ in $\omega^{WP}$ equals
%\begin{footnotesize}
\begin{eqnarray*}
&&\hspace{-15mm} \ii \; \left(\left\langle \frac{\pt s^\alpha}{\pt \tau^j}
\frac{\pt}{\pt s^\alpha}, \frac{\pt s^\beta}{\pt \ell^m}\frac{\pt}{\pt s^\beta}
\right\rangle - \left\langle \frac{\pt s^\alpha}{\pt \ell^m} \frac{\pt}{\pt
s^\alpha}, \frac{\pt s^\beta}{\pt \tau^j}\frac{\pt}{\pt s^\beta} \right\rangle
\right)\\
&=& \ii \; \left(\left\langle \frac{\pt }{\pt \tau^j} , \frac{\pt s^\beta}{\pt
\ell^m}\frac{\pt}{\pt s^\beta} \right\rangle - \left\langle \frac{\pt
s^\alpha}{\pt \ell^m} \frac{\pt}{\pt s^\alpha}, \frac{\pt }{\pt \tau^j}
\right\rangle \right)\\
&=\atop{\eqref{eq:ltaudual1}\eqref{eq:ltaudual2}}& - \left(\frac{\pt
\ell^j}{\pt s^\ol\beta}\frac{\pt s^\ol\beta}{\pt \ell^m}+ \frac{\pt \ell^j}{\pt
s^\alpha}\frac{\pt s^\alpha}{\pt \ell^m}\right)\\
&=& -\delta_{\ell m} . \hspace{6cm} \qed
\end{eqnarray*}
%\end{footnotesize}

Now we obtain the following formula of Wolpert from the equality $\omega^{FN}=\omega^{WP}$:

\begin{corollary}
The length coordinates $\ell^j$ give rise to Hamiltonian vector fields
$\partial/\partial \ell^j$:
\begin{equation}\label{eq:detaues}
\frac{\partial \tau^j}{\partial s^\alpha}=-\ii \left\langle
\frac{\partial}{\partial s^\alpha},\frac{\partial}{\partial \ell^j}
\right\rangle.
\end{equation}
\end{corollary}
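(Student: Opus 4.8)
The plan is to read off \eqref{eq:detaues} directly from the identity $\omega^{FN}=\omega^{WP}$ of Theorem~\ref{cor:FNWP}, with no further geometric input. Conceptually, contracting the \fn form $\omega^{FN}=-\sum_k d\tau^k\wedge d\ell^k$ with the vector field $\pt/\pt\ell^j$ gives the interior product $\iota_{\pt/\pt\ell^j}\,\omega^{FN}=d\tau^j$; this is precisely the assertion that $\pt/\pt\ell^j$ is the Hamiltonian vector field of $\tau^j$, which is the content of the corollary. Transporting this identity to $\omega^{WP}$ and pairing with $\pt/\pt s^\alpha$ produces \eqref{eq:detaues}. Equivalently, and this is how I would carry out the computation, I would evaluate both coordinate representations of the one symplectic form on the pair $(\pt/\pt\ell^j,\,\pt/\pt s^\alpha)$ and compare.

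On the \fn side, since $\tau^i$ and $\ell^i$ are independent coordinates one has $\pt\tau^k/\pt\ell^j=0$ and $d\ell^k(\pt/\pt\ell^j)=\delta^k_j$. Hence, evaluating $\omega^{FN}=-\sum_k d\tau^k\wedge d\ell^k$ on $(\pt/\pt\ell^j,\pt/\pt s^\alpha)$ collapses to the single surviving term and yields $\pt\tau^j/\pt s^\alpha$.

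On the \wp side I start from $\omega^{WP}=\ii\,G_{\kappa\ol\lambda}^{WP}\,ds^\kappa\wedge ds^\ol\lambda$. Because $\pt/\pt s^\alpha$ is of pure type $(1,0)$ one has $ds^\ol\lambda(\pt/\pt s^\alpha)=0$, so only the term with $\kappa=\alpha$ survives and the evaluation on $(\pt/\pt\ell^j,\pt/\pt s^\alpha)$ becomes $-\ii\,G_{\alpha\ol\lambda}^{WP}\,(\pt s^\ol\lambda/\pt\ell^j)$. Using that the inner product is conjugate-linear in its second slot, the factor $G_{\alpha\ol\lambda}^{WP}\,(\pt s^\ol\lambda/\pt\ell^j)$ is recognized as the inner product of $\pt/\pt s^\alpha$ with the $(1,0)$-part of $\pt/\pt\ell^j$. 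Invoking the same \emph{type consideration} already used in the proof of Theorem~\ref{cor:FNWP} — the extended \wp inner product of a $(1,0)$-vector against a $(0,1)$-vector vanishes, so the $(0,1)$-part of $\pt/\pt\ell^j$ contributes nothing — this equals $-\ii\,\langle\pt/\pt s^\alpha,\pt/\pt\ell^j\rangle$. Equating the two evaluations gives \eqref{eq:detaues}.

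The computation is short; the only delicate points are bookkeeping ones. The main thing to get right is the type consideration in the last step, namely the justification that $\langle\pt/\pt s^\alpha,\pt/\pt\ell^j\rangle$ may be replaced by the inner product of $\pt/\pt s^\alpha$ with the $(1,0)$-part of $\pt/\pt\ell^j$, together with tracking the conjugation convention so that the single factor of $\ii$ and the overall sign come out correctly. I would also flag explicitly that there is no circularity: although \eqref{eq:detaues} could in principle be used to kill the $d\ell^k\wedge d\ell^m$ coefficient in Theorem~\ref{cor:FNWP}, that vanishing was instead obtained independently through the symmetry argument, so Theorem~\ref{cor:FNWP} is available here unconditionally.
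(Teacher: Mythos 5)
Your proposal is correct and takes essentially the same route as the paper: the paper likewise obtains \eqref{eq:detaues} directly from the equality $\omega^{FN}=\omega^{WP}$ of Theorem~\ref{cor:FNWP}, merely leaving implicit the evaluation of both coordinate expressions of the form on the pair $\left(\pt/\pt\ell^j,\,\pt/\pt s^\alpha\right)$ that you spell out. Your non-circularity remark also matches the paper exactly, which notes that the $d\ell^k\wedge d\ell^m$ coefficients were handled by Wolpert's symmetry argument precisely because \eqref{eq:detaues} ``may not be available at this point.''
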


The above equation \eqref{eq:ltaudual2} also implies the following corollary, whose classical version is Theorem~2.11 from \cite{wolpert:fn-annals}: We first observe that \eqref{eq:ltaudual2} holds in a slightly more general situation. Namely, as usual, we pick local holomorphic
coordinates $(s_1,\ldots, s_N)$ on the \tei space near a point $s_0$ and denote by $\mu_\alpha$ the harmonic Beltrami differentials corresponding to the
$\pt/\pt s^\alpha|_{s_0}$. Then for any closed geodesic $\gamma$ on $X$ we
have the length function $\ell(\gamma)$, and also $\pt/\pt \tau(\gamma)$ is defined at the given point $s_0$ without a choice of a system of \fn coordinates. Now \eqref{eq:ltaudual2} reads
\begin{equation}\label{eq:ltaudualvar}
\frac{\pt \ell(\gamma)}{\pt s^\alpha}= \ii \; \left\langle \frac{\pt }{\pt
s^\alpha}, \frac{\pt}{\pt \tau(\gamma)}\right\rangle.
\end{equation}
for all $\alpha=1,\ldots, N$.

\begin{corollary} Let $\gamma$ and $\delta$ be closed geodesics on $X$. Then
\begin{equation}\label{eq:wol2.11}
\frac{\partial \ell(\gamma)}{\partial \tau(\delta)} = - \frac{\partial \ell(\delta)}{\partial\tau(\gamma)}.
\end{equation}
\end{corollary}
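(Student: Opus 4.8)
The plan is to read off the antisymmetry \eqref{eq:wol2.11} directly from the generalized duality \eqref{eq:ltaudualvar}, using nothing beyond the facts that the length functions are real-valued and that the \wp pairing is Hermitian. Write $v_\gamma=\pt/\pt\tau(\gamma)$ and $v_\delta=\pt/\pt\tau(\delta)$ for the two twist fields, and let $v_\gamma^{1,0}$, $v_\delta^{1,0}$ denote their $(1,0)$-parts under the type decomposition of the real tangent space. First I would differentiate the real function $\ell(\gamma)$ along the real field $v_\delta$; since $\ell(\gamma)$ is real, the $(0,1)$-contribution is the complex conjugate of the $(1,0)$-contribution, so
\[
\frac{\pt\ell(\gamma)}{\pt\tau(\delta)}=\frac{\pt s^\alpha}{\pt\tau(\delta)}\frac{\pt\ell(\gamma)}{\pt s^\alpha}+\overline{\left(\frac{\pt s^\alpha}{\pt\tau(\delta)}\frac{\pt\ell(\gamma)}{\pt s^\alpha}\right)}.
\]

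Next I would insert \eqref{eq:ltaudualvar} for $\pt\ell(\gamma)/\pt s^\alpha$ and absorb the factor $\pt s^\alpha/\pt\tau(\delta)$ into the first slot of the inner product. By the same type consideration used in the proof of Theorem~\ref{cor:FNWP} — a $(1,0)$-vector in the first slot pairs only against the $(1,0)$-part of a real vector in the second slot — this gives
\[
\frac{\pt s^\alpha}{\pt\tau(\delta)}\frac{\pt\ell(\gamma)}{\pt s^\alpha}=\ii\,\langle v_\delta^{1,0},v_\gamma\rangle=\ii\,\langle v_\delta^{1,0},v_\gamma^{1,0}\rangle,
\]
hence $\pt\ell(\gamma)/\pt\tau(\delta)=\ii\langle v_\delta^{1,0},v_\gamma^{1,0}\rangle-\ii\,\overline{\langle v_\delta^{1,0},v_\gamma^{1,0}\rangle}=-2\,\mathrm{Im}\,\langle v_\delta^{1,0},v_\gamma^{1,0}\rangle$. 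Interchanging $\gamma$ and $\delta$ and using the Hermitian symmetry $\langle v_\gamma^{1,0},v_\delta^{1,0}\rangle=\overline{\langle v_\delta^{1,0},v_\gamma^{1,0}\rangle}$ of the \wp form yields $\pt\ell(\delta)/\pt\tau(\gamma)=-2\,\mathrm{Im}\,\langle v_\gamma^{1,0},v_\delta^{1,0}\rangle=+2\,\mathrm{Im}\,\langle v_\delta^{1,0},v_\gamma^{1,0}\rangle$, which is exactly the negative of $\pt\ell(\gamma)/\pt\tau(\delta)$. This is \eqref{eq:wol2.11}.

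I expect the only delicate step to be the second one: pinning down how the \wp inner product, defined a priori only on the holomorphic tangent space, is to be evaluated on the complexified real twist fields, so that the passage $\langle v_\delta^{1,0},v_\gamma\rangle=\langle v_\delta^{1,0},v_\gamma^{1,0}\rangle$ is legitimate. This, however, is precisely the identification already established and exploited in the proof of Theorem~\ref{cor:FNWP}, so it requires no new argument; once it is granted, the antisymmetry is a one-line consequence of the reality of $\ell$ together with the Hermitian symmetry of $\langle\,\cdot\,,\,\cdot\,\rangle$. Equivalently, one could phrase the whole computation as the antisymmetry of $\omega^{WP}(v_\gamma,v_\delta)$, the twist fields being Hamiltonian for the length functions by Corollary~\ref{cor:main1}, but the direct route above has the advantage of invoking only \eqref{eq:ltaudualvar}.
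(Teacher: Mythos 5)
Your proposal is correct and follows essentially the same route as the paper's own proof: both start from the reality of $\ell(\gamma)$ (writing the real twist derivative as a term plus its conjugate, i.e.\ $2\,\mathrm{Re}$), insert the duality \eqref{eq:ltaudualvar}, invoke the same type consideration to reduce the pairing to the $(1,0)$-parts, and conclude from the Hermitian symmetry of the \wp product. The only difference is notational — you phrase the result invariantly as $-2\,\mathrm{Im}\left\langle v_\delta^{1,0}, v_\gamma^{1,0}\right\rangle$, while the paper writes the manifestly skew-symmetric coordinate expression $\ii\, G^{WP}_{\alpha\ol\beta}\left(\frac{\pt s^\alpha}{\pt \tau(\delta)}\frac{\pt s^\ol\beta}{\pt \tau(\gamma)} - \frac{\pt s^\alpha}{\pt \tau(\gamma)}\frac{\pt s^\ol\beta}{\pt \tau(\delta)}\right)$ — and these are the same identity.
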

\begin{proof} We show that the left-hand side is skew symmetric in $\gamma$ and $\delta$.
%\begin{footnotesize}
\begin{eqnarray*}
\frac{\partial \ell(\gamma)}{\partial \tau(\delta)} &=&2 \text{\rm Re}\left(
\frac{\partial\ell(\gamma)}{\pt s^\alpha}\cdot \frac{\pt s^\alpha}{\pt \tau(\delta)} \right)\\
&=\atop\eqref{eq:ltaudualvar}& 2 \text{\rm Re} \left(\ii\;\frac{\pt s^\alpha}{\pt \tau(\delta)}\cdot \left\langle\frac{\pt}{\pt s^\alpha},\frac{\partial}{\pt \tau(\gamma)}  \right\rangle \right)\\
&=& 2 \text{\rm Re} \left(\ii\;\frac{\pt s^\alpha}{\pt \tau(\delta)}\cdot\left\langle\frac{\pt}{\pt s^\alpha},\frac{\partial s^\beta}{\pt\tau(\gamma)}\frac{\pt}{\pt s^\beta} \right\rangle \right) \text{\footnotesize\quad (type consideration) }\\
&=& 2 \text{\rm Re} \left( \ii\; G^{WP}_{\alpha\ol\beta}\frac{\pt s^\alpha}{\pt\tau(\delta)} \frac{\pt s^\ol\beta}{\pt \tau(\gamma)} \right)\\
&=& \ii \; G^{WP}_{\alpha\ol\beta} \left(\frac{\pt s^\alpha}{\pt \tau(\delta)}
\frac{\pt s^\ol\beta}{\pt \tau(\gamma)} - \frac{\pt s^\alpha}{\pt \tau(\gamma)}
\frac{\pt s^\ol\beta}{\pt \tau(\delta)}\right).
\end{eqnarray*}
%\end{footnotesize}
\end{proof}

\section{Weighted punctured Riemann surfaces and conical metrics}\label{sec:cone}
Let $X$ be a compact Riemann surface of genus $p$ and ${\bf a} = \sum_{i=1}^{n} a_i p_i$ be an $\R$-divisor, with $0<a_i\leq 1$ for all $i$. Then $(X,\bf a)$ is called a weighted punctured Riemann surface. We always assume that the degree of the $\R $-divisor $ K_{(X, {\bf a})} := K_X + { \bf a}$, is positive. It was shown by Hulin, McOwen and Troyanov \cite{h-t:curv,Mc-O2,Troy} that such surfaces possess unique hyperbolic conical metrics. These are characterized as follows:
\begin{itemize}
\item[(i)] For $a_j=1$  the metric $g$ satisfies a Poincaré growth condition at $p_j$,  i.e.\ $g = \frac{ \rho(z) }{ |z|^2 \log^2(|z|^2)}|d z|^2 $ in some local holomorphic coordinate system, where $p_j$ corresponds to $z=0$, and $\rho$ is a continuous positive function.
\item[(ii)] For $ 0 < a_j <1$ the metric is of the form $g = \frac{ \rho(z) }{|z|^{2a_j}}|dz|^2$, where $\rho$ is continuous.
\end{itemize}
In a holomorphic family, in particular regarding \tei and moduli spaces,
punctures are labeled. The complex structure of the resulting \tei space
$\cT_{\bf a}$ of weighted punctured surfaces (with weights being fixed) can be
identified with $\cT_{p,n}$. However, weights give rise to a hierarchy of
compactifications of the corresponding moduli spaces, which were extensively
investigated by Hassett \cite{Ha:modwe}.

On the other hand, hyperbolic conical metrics induce different \ka\ structures
on \tei and moduli spaces. Generalized \wp metrics were introduced and studied
in \cite{sch-tra, sch:conic}.

From the viewpoint of hyperbolic geometry, the \tei space of weighted punctured Riemann surfaces were studied by Zhang in \cite{zhang}. Under the seemingly
necessary assumption that the weights are between $1/2$ and $1$, Zhang showed
the existence of \fn coordinates so that the Fenchel-Nielsen symplectic form
$\omega^{FN}$ becomes meaningful. The above range of weights contains the
interesting weights of the form $1 - 1/k$ that arise from finite quotients. Any pants decomposition of a Riemann surface with conical singularities with the
above restriction avoids the punctures. So Theorems~\ref{th:main1} and
\ref{th:main2} are still valid.

Concerning the notion of harmonic Beltrami differentials, Hilbert space theory
is not available in general. Instead we use equation \eqref{eq:harmqua} as a
definition for the space $H^1(X,\bf a)$ of such differentials on a weighted
punctured Riemann surface $(X, \bf a)$. In \cite{sch-tra} the existence of a
duality
\begin{equation}
H^1(X,{\bf a}) \times H^0(X, \cO_X(2K_X + D)) \to \C,
\end{equation}
was shown, where $D= \sum_i p_i$,  is the underlying divisor to $\bf a$ with
weights one. Also \eqref{eq:muharm} holds. The $L^2$ inner product on
$H^1(X,{\bf a})$ induced by the conical hyperbolic metric gives rise to a
generalized \wp metric, which is of \ka\ type \cite{sch-tra,sch:conic}.
The statement and proof of the Theorem~\ref{cor:FNWP} can be carried over litterally to this case. So all
necessary conditions for the coincidence of $\omega^{FN}$ and $\omega^{WP}$ are satisfied for weighted punctured Riemann surfaces.

\end{document}